\documentclass[a4paper,11pt]{article}
\usepackage{amssymb,amsmath,amsthm,amsfonts}
\usepackage{bookmark}
\usepackage{mathrsfs}
\usepackage{multirow}
\usepackage{graphicx}
\usepackage{authblk}
\usepackage{indentfirst}
\usepackage{multicol}
\usepackage{tabu}
\usepackage{url}
\usepackage{fancyhdr}
\usepackage[numbers]{natbib}
\usepackage[all]{xy}
\usepackage{mathtools}
\usepackage[english]{babel}
\usepackage{colortbl}
\usepackage{caption}
\usepackage{hyperref}\usepackage{subcaption}
\usepackage{tikz}\usepackage{float}\usepackage{wrapfig}

\usepackage{amsthm} 
\newtheoremstyle{example_nonitalic} 
  {} 
  {} 
  {\normalfont} 
  {} 
  {\bfseries} 
  {.} 
  { } 
  {} 

\theoremstyle{example_nonitalic} %
\newtheorem{example}{Example}[section]

\theoremstyle{plain} %
\newtheorem{theorem}{Theorem}[section]
\newtheorem{proposition}[theorem]{Proposition}

\newtheorem{definition}{Definition}[section]

\newcommand{\im}{{\mathrm{im}\hspace{0.1em}}}

\newcommand{\rank}{{\mathrm{rank}\hspace{0.1em}}}
\newcommand{\spann}{{\mathrm{span}_{\mathbb{K}}\hspace{0.1em}}}

\usepackage{xr}
\makeatletter
    \newcommand*{\addFileDependency}[1]{
    \typeout{(#1)}
    \@addtofilelist{#1}
    \IfFileExists{#1}{}{\typeout{No file #1.}}
    }
\makeatother


\captionsetup{font=footnotesize}
\topmargin=-0.45in
\evensidemargin=0in
\oddsidemargin=0in
\textwidth=6.5in
\textheight=9.0in
\headsep=0.25in
\linespread{1.1}
\cfoot{\thepage}
\cfoot{abc}

\setlength{\parindent}{2em}
\title{Topological Sequence Analysis of Genomes: Delta Complex approaches}
\author[1,2]{Jian Liu}
\author[2]{Li Shen}
\author[2]{Dong Chen}
\author[2,3,4]{Guo-Wei Wei \thanks{Corresponding author: weig@msu.edu}}
\affil[1]{Mathematical Science Research Center, Chongqing University of Technology, Chongqing 400054, China}
\affil[2]{Department of Mathematics, Michigan State University, MI 48824, USA}
\affil[3]{Department of Electrical and Computer Engineering, Michigan State University, MI 48824, USA}
\affil[4]{Department of Biochemistry and Molecular Biology, Michigan State University, MI 48824, USA}


\makeatletter
    \renewcommand*{\@fnsymbol}[1]{\ensuremath{\ifcase#1\or \dagger\or *\or *\or
   \mathsection\or \else\@ctrerr\fi}}
\makeatother
\date{}

\begin{document}
    \maketitle
    \paragraph{Abstract}

 Algebraic topology has been widely applied to point cloud data to capture geometric shapes and topological structures. However, its application to genome sequence analysis remains rare. In this work, we propose topological sequence analysis (TSA) techniques by constructing $\Delta$-complexes and classifying spaces, leading to persistent homology, and persistent path homology on genome sequences. We also develop $\Delta$-complex-based persistent Laplacians to facilitate the topological spectral analysis of genome sequences. Finally, we demonstrate the utility of the proposed TSA approaches in phylogenetic analysis using Ebola virus sequences and whole bacterial genomes. The present TSA methods are more efficient than earlier TSA model, k-mer topology, and thus have a potential to be applied to other time-consuming sequential data analyses, such as those in linguistics, literature, music,  media, and social contexts.

\paragraph{Keywords}
     Topological sequence analysis, $\Delta$-complex, classifying space, genome sequences, phylogenetics.

\footnotetext[1]
{ {\bf 2020 Mathematics Subject Classification.}  	Primary  55N31; Secondary 62R40, 68Q07.
}

    \newpage
    \tableofcontents
    \newpage

\section{Introduction}\label{section:introduction}
Biological sequences, including the linear arrangements of nucleotides in DNA and RNA or amino acids in proteins, encode the genetic information necessary for the growth, development, and functioning of living organisms. Enormous efforts have been made in biological sequence analysis to understand the structure, function, and evolution of these sequences. To this end, a wide variety of computational and statistical methods have been developed for various tasks, including pairwise and multiple sequence alignment, motif and pattern discovery, genome assembly and annotation, phylogenetic analysis, variant calling and analysis, transcriptomics and RNA analysis, protein sequence analysis, epigenomics, and regulatory element identification.
Among these tasks, phylogenetic analysis studies the evolutionary relationships among biological species, individuals, or genes based on their genetic, morphological, or molecular data. It plays a critical role in understanding evolutionary history, the diversity of life, molecular epidemiology, taxonomy, systematics, and comparative genomics.

Both alignment-based and alignment-free methods have been developed for phylogenetic analysis. Alignment-based methods are crucial for studying a given species to identify mutations and genetic variants, whereas alignment-free methods are typically used for cross-species analysis, comparison, and ranking. Many alignment-free methods have been developed, including the natural vector method (NVM) \cite{deng2011novel,yu2013real}, Jensen-Shannon frequency profile (JS-FFP) \cite{jun2010whole,sims2009alignment}, Kullback-Leibler frequency profile (KL-FFP) \cite{vinga2003alignment}, Fourier Power Spectrum (FPS) \cite{hoang2015new}, and Markov KString \cite{qi2004whole}.  These methods analyze DNA sequences from different perspectives, focusing on frequency, statistical features, and patterns. NVM captures spatial distribution by analyzing k-mer positions, while JS-FFP and KL-FFP measure sequence similarity using divergence metrics based on base pair frequencies. FPS applies Fourier analysis to identify periodic components, and Markov KString models base dependencies to uncover local patterns. These approaches provide valuable insights into the complexity and structure of DNA sequences, with applications in genome, evolutionary biology, and phylogenetic analysis. While effective, these methods lack topological perspectives. By utilizing topological data analysis (TDA), one can extract novel insights into DNA sequences through persistent homology \cite{carlsson2009topology, edelsbrunner2008persistent} and persistent spectral theory \cite{chen2021evolutionary,wang2020persistent}.

Recent years have witnessed significant development in  topological data analysis. The strength of TDA lies in its ability to uncover the inherent topological structure of complex data, capturing global features, and providing strong resistance to noise. Due to these unique characteristics, TDA has found successful applications in fields such as molecular biology, materials science, and machine learning \cite{chen2024multiscale, sorensen2020revealing, pun2022persistent}. For example, topological deep learning (TDL) was first introduced in 2017 \cite{cang2017topologynet} and has become an emerging paradigm in data science since  then  \cite{papamarkou2024position}. More recently,   persistent spectral graph \cite{wang2020persistent}, quantum persistent homology \cite{ameneyro2024quantum,wee2023persistent,suwayyid2024persistent}, and persistent interaction topology \cite{liu2025persistent} have extended the scope of TDA.

TDA has been used to analyze viral sequences, focusing on sequence dissimilarity and representation \cite{chan2013topology,nguyen2022topological,dlotko2025combinatorial}. More recently, $k$-mer topology, a k-mer based topological sequence analysis (TSA) method, has been introduced to reveal the structure of genomic space, using persistent homology and/or persistent Laplacians to examine topological persistence and genetic distances across diverse genomic datasets \cite{hozumi2024revealing}.  The $k$-mer topology model was validated with large variety of phylogenetic tasks and outperformed other competing methods in genome analysis.

While a topological perspective may not be the most competitive for all biological task, it offers a novel approach and valuable insights for understanding biological structure, function, and dynamics. However, the application of algebraic topology to genome sequence analysis is still in its early stages. For example, $k$-mer topology is very time-consuming for analyzing large genomes. Therefore, there is a need to further develop topological  methods for sequential data.

In this work, we propose a $\Delta$-complex based topological sequence analysis  tailored for DNA or RNA sequences. Inspired by the concepts of $\Delta$-complexes and classifying spaces, we treated segments of a sequence as simplices, which serve as the basic building blocks of a $\Delta$-complex. Unlike simplices in conventional complexes, these simplices can consist of repeated elements, aligning more intuitively with the nature of sequences. Note that unlike k-mer topology, the proposed TSA method does not systematically analyze k-mers.

For a given sequence, we introduced a function defined on its segments, assigning values to simplices in the corresponding $\Delta$-complex. This approach facilitated the construction of a filtration of $\Delta$-complexes. Notably, the collection of simplices arising from the level sets of such functions does not always form a valid $\Delta$-complex, imposing certain constraints on the function. To address this, we considered the $\Delta$-closure of these simplices, ensuring that any function could yield a well-defined filtration of $\Delta$-complexes, thus enabling the definition of algebraic topology for sequences.

A particular case of interest is when the filtration function assigns the length of a sequence segment as its value. Under this construction, the resulting $\Delta$-complex corresponds exactly to the path complex, leading to the definition of persistent path homology \cite{chowdhury2018persistent} and persistent path Laplacian \cite{wang2023persistent}.

Additionally, for genome sequences composed of elements with an underlying group structure, such as DNA sequences consisting of the nucleotides A, C, G, and T (U), persistent homology can be constructed on the classifying space of the group, yielding intriguing new insights. Furthermore, we explored the integration of persistent Laplacian information into the multi-scale analysis of sequences. Since the kernel of the Laplacian operator is isomorphic to real-coefficient homology, its non-zero eigenvalues provide complementary information that has been shown to be critical in applications such as molecular biology. To demonstrate the computability of these TSA models, we provided examples such as "N-China-F," "N1-U.S.-P," "N2-U.S.-P," and "N3-U.S.-P" \cite{wang2020mutations}. These examples illustrate the versatility of our methods and their potential for broader applications in virus analysis and diagnostics.

In applications, we applied TSA  for phylogenetic analysis on two datasets: Ebola virus sequences and whole bacterial genomes. The Ebola dataset comprises 59 complete sequences distributed across five families, while the bacterial genome dataset includes 30 complete genomes spanning nine families. Using the spectral gap curves in dimension 1 as features, our approach achieved clustering that closely aligns with the expected classifications, albeit with some minor discrepancies, while maintaining relatively higher computational efficiency than k-mer topology.

The structure of our paper is organized as follows. The next section provides a review of the basic concepts of $\Delta$-complexes and classifying spaces. Section \ref{section:method} introduces TSA methods applied to genome sequences. In Section \ref{section:application}, we present a direct application. Finally, the paper concludes with a summary and discussion.

\section{Preliminaries}

\subsection{Simplicial complex and homology}

\subsection{$\Delta$-complex and its homology}

A $\Delta$-complex is a generalization of a simplicial complex used in algebraic topology to describe the combinatorial structure of spaces. Like simplicial complexes, $\Delta$-complexes are constructed from simplices, such as vertices, edges, triangles, and higher-dimensional analogs. However, they offer greater flexibility by allowing simplices to overlap in more general ways. This added flexibility highlights the broader applicability of $\Delta$-complexes in various areas of topology, where their relaxed conditions enable versatile representations of spaces.

A semi-simplicial set or $\Delta$-complex $K$ is a graded set $\{K_{n}\}_{n\geq 0}$ equipped with a collection of face maps $d_{i}:K_{n}\to K_{n-1}$ for $0\leq i\leq n$ such that
\begin{equation*}
  d_{i}d_{j}=d_{j}d_{i-1},\quad i<j.
\end{equation*}
In particular, a simplicial complex is a $\Delta$-complex.
Recall that given a manifold, we can obtain a simplicial complex through a simplicial triangulation. Similarly, by triangulating the manifold, we can obtain a $\Delta$-complex. Compared to the simplicial triangulation of a simplicial complex, representing the manifold's triangulation as a $\Delta$-complex is more concise and streamlined.

Let $K$ be a $\Delta$-complex, and let $\mathbb{K}$ be a field, such as the real number field $\mathbb{R}$, or the binary field $\mathbb{Z}/2$. Define $C_{n}(K)$ as the $\mathbb{K}$-linear space generated by the elements in $K_{n}$ for $n \geq 0$. We have a linear map $\partial_{n}:C_{n}(K)\to C_{n-1}(K)$ as follows:
\begin{equation*}
  \partial_{n} = \sum_{i=0}^{n}(-1)^{i}d_{i},\quad n \geq 0.
\end{equation*}
In particular, $\partial_{0}=0$. It can be verified that $\partial_{n-1}\circ\partial_{n}=0$. Thus, one has a chain complex
\begin{equation*}
\xymatrix{
  \cdots \ar@{->}[r]&C_{n}(K)\ar@{->}[r]^{\partial_{n}}& C_{n-1}(K)\ar@{->}[r]&\cdots \ar@{->}[r] & C_{2}(K)\ar@{->}[r]^{\partial_{2}}&C_{1}(K)\ar@{->}[r]^{\partial_{1}}& C_{0}(K).
  }
\end{equation*}
An $n$-cycle in $C_{n}(K)$ is an element $x$ such that $\partial_{n}x = 0$. The $\mathbb{K}$-linear space generated by all $n$-cycles is denoted by $Z_{n}(K)$. Similarly, an $n$-boundary is an element of the form $\partial_{n+1}y$ for some $y \in C_{n+1}(K)$. The $\mathbb{K}$-linear space generated by the $n$-boundaries is denoted by $B_{n}(K)$. By definition, every $n$-boundary is also an $n$-cycle, i.e., $B_{n}(K) \subseteq Z_{n}(K)$. The homology of the $\Delta$-complex $K$ is defined as
\begin{equation*}
  H_{n}(K) = Z_{n}(K)/B_{n}(K), \quad n \geq 0.
\end{equation*}
The rank of the homology group $H_{n}(K)$, known as the Betti number and denoted by $\beta_{n}$, represents the number of $n$-dimensional holes or independent cycles in the $\Delta$-complex $K$. These Betti numbers provide important topological features that capture the structure and complexity of the space at each dimension.

A 2-dimensional sphere can be represented as a $\Delta$-complex. Let the upper hemisphere be denoted by $\sigma$ and the lower hemisphere by $\tau$. Their intersection is a circle, denoted by $e_0$, and we choose a base point on this circle, labeled $v_0$. On the upper hemisphere, we select a point $v_1$ and connect $v_0$ to $v_1$ to form an edge $e_1$. Similarly, on the lower hemisphere, we select a point $v_2$ and connect $v_0$ to $v_2$, forming the edge $e_2$.
\begin{figure}[htb!]
  \centering
  \includegraphics[width=0.6\textwidth]{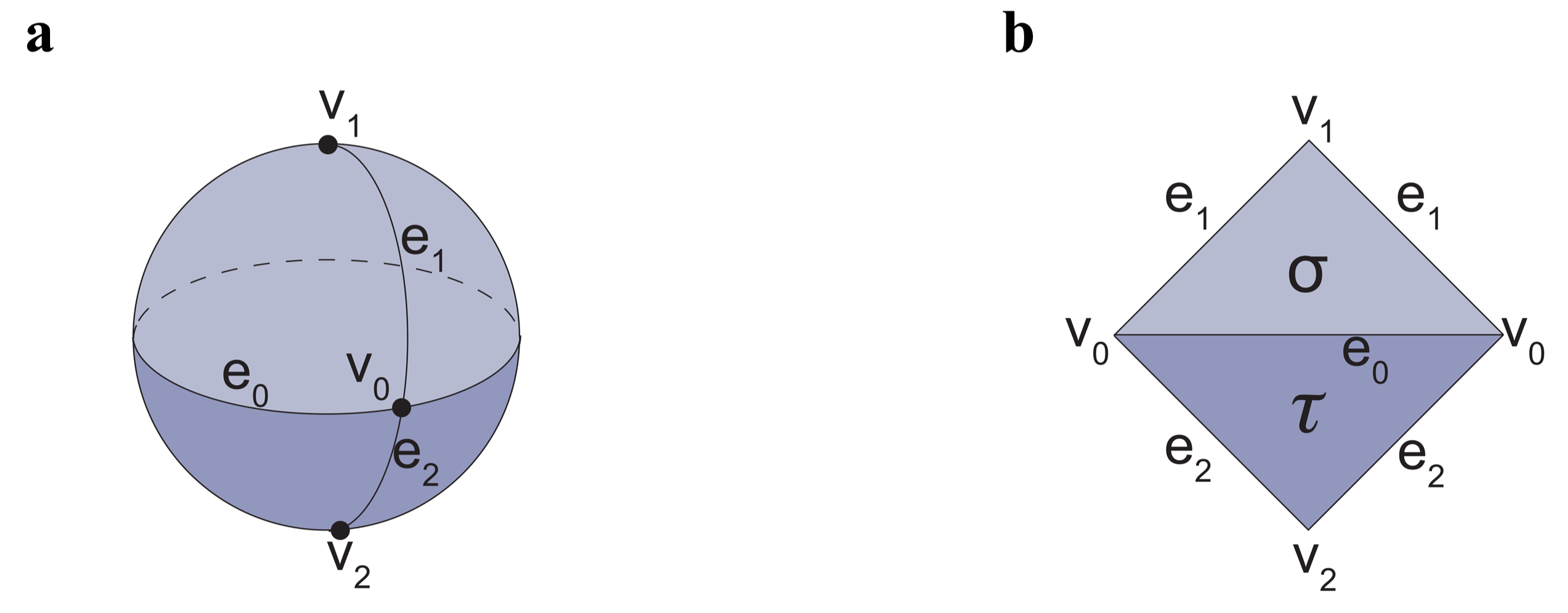}\\
  \caption{\textbf{a} Illustrating the triangulation of a 2-dimensional sphere.  \textbf{b} The $\Delta$-complex of the triangulation of a 2-dimensional sphere.}\label{figure:delta}
\end{figure}

In this way, we obtain a $\Delta$-complex with the following structure:
\begin{equation*}
  K_{0}=\{v_{0},v_{1},v_{2}\},\quad K_{1}=\{e_{0},e_{1},e_{2}\},\quad K_{2}=\{\sigma,\tau\}.
\end{equation*}
The face maps are given by:
\begin{equation*}
  \begin{split}
      & \partial_{0} \sigma =e_{0}, \quad\partial_{1} \sigma =\partial_{2} \sigma =e_{1}, \\
      & \partial_{0} \tau =e_{0}, \quad\partial_{1} \tau =\partial_{2} \tau =e_{2},\\
      & \partial_{0} e_{0}=\partial_{1} e_{0}=v_{0}, \\
      & \partial_{0} e_{1}=v_{0},\quad \partial_{1} e_{1}=v_{1}, \\
      & \partial_{0} e_{2}=v_{0}, \quad\partial_{1} e_{2}=v_{2} .
  \end{split}
\end{equation*}
Recall that if we represent a 2-dimensional sphere using a simplicial complex, it requires at least 4 vertices, 6 edges, and 4 faces. It will be more complex and also complicates the computation of its homology groups.

Now, consider the chain complex of of $K$. By definition, the chain complex of $K$ is given as
\begin{equation*}
     C_{2}(K) =\spann\{\sigma,\tau\}, \quad C_{1}(K) =\spann\{e_{0},e_{1},e_{2}\},\quad C_{0}(K) = \spann\{v_{0},v_{1},v_{2}\}.
\end{equation*}
The space of cycles of $K$ is
\begin{equation*}
  Z_{2}(K) = \spann\{\sigma-\tau\},\quad Z_{1}(K) = \spann\{e_{0}\}, \quad Z_{0}(K) = \spann\{v_{0},v_{1},v_{2}\}.
\end{equation*}
The space of boundaries of $K$ is given by
\begin{equation*}
  B_{1}(K) = \spann\{e_{0}\}, \quad B_{0}(K) = \spann\{v_{1}-v_{0},v_{2}-v_{0}\}.
\end{equation*}
Thus, the homology is
\begin{equation*}
  H_{n}(K)=\left\{
          \begin{array}{ll}
            \mathbb{K}, & \hbox{$n=0,2$;} \\
            0, & \hbox{otherwise.}
          \end{array}
        \right.
\end{equation*}
The result of this computation is consistent with the known homology of the 2-dimensional sphere.

\subsection{Classifying space}

A topological group is a group equipped with a topology such that both the group multiplication and the inverse operation are continuous maps. In particular, a discrete group can be viewed as a topological group with the discrete topology. Given a topological group $G$, we can define a $\Delta$-complex $EG$ as follows. The $n$-simplices of this $\Delta$-complex are given by $(n+1)$-tuples $(g_{0}, g_{1}, \dots, g_{n})$ where each $g_{i}$ is an element of $G$. The face map $d_{i}: (EG)_{n} \to (EG)_{n-1}$ is defined as
\begin{equation*}
  d_{i}(g_{0},g_{1},\dots,g_{n}) = (g_{0},\dots,g_{i-1},g_{i+1},\dots,g_{n})
\end{equation*}
for $0 \leq i \leq n$. It is worth noting that the elements in the $(n+1)$-tuples $(g_{0}, g_{1}, \dots, g_{n})$ are not required to be distinct, meaning that repetitions are allowed. For example, the word ``allowed'', letter ``l'' appears twice. This is a key reason why the construction forms a $\Delta$-complex rather than a simplicial complex, as simplicial complexes typically require that the vertices of each simplex be distinct.

The complex $EG$ is a contractible space, that is, it has the homotopy type of a point. It follows that the homology $H_{n}(EG)=\left\{
                                                                                                                                  \begin{array}{ll}
                                                                                                                                    \mathbb{K}, & \hbox{$n=0$;} \\
                                                                                                                                    0, & \hbox{otherwise.}
                                                                                                                                  \end{array}
                                                                                                                                \right.
$.
There is a natural group action of $G$ on the space $EG$, defined by
\begin{equation*}
  G\times EG \to EG,\quad g\cdot (g_{0},g_{1},\dots,g_{n}) =(gg_{0},gg_{1},\dots,gg_{n}).
\end{equation*}
The quotient space $BG = EG / G$, known as the orbit space of $G$ acting on $EG$, is called the \emph{classifying space} of $G$. And $EG$ is called the total space of the classifying space $BG$. It can be verified that $BG$ inherits a $\Delta$-complex structure from $EG$, making it a suitable model for computations in algebraic topology, particularly for studying the homotopy-theoretic properties of $G$. It is known that the classifying space has the homotopy type of the Eilenberg-MacLane space $K(G,1)$. Geometrically, the classifying space $BG$ classifies principal $G$-bundles over topological spaces. More precisely, for a topological group $G$, any principal $G$-bundle over a space $X$ corresponds to a map from $X$ to the classifying space $BG$. This map is unique up to homotopy, and the set of isomorphism classes of principal $G$-bundles over $X$ is in bijection with the homotopy classes of maps from $X$ to $BG$. Thus, $BG$ serves as a universal parameter space for such bundles.

Consider the Abelian group $G=\mathbb{Z}/2$. The classifying space of $G$ is $BG=\mathbb{R}P^{\infty}$, the infinite-dimensional real projective space. The homology of $\mathbb{R}P^{\infty}$ is
\begin{equation*}
  H_{n}(\mathbb{R}P^{\infty};\mathbb{Z}/2)=\mathbb{Z}/2,\quad n \geq 0.
\end{equation*}
This result aligns with the direct calculation of the classifying space $BG$. For example, when $n=1$ and considering group coefficients $\mathbb{K} = \mathbb{Z}/2$, we have
\begin{equation*}
\begin{split}
    & C_{2}(BG)=\spann\{(000),(001),(010),(100)\}, \\
    & C_{1}(BG)=\spann\{(00),(01)\},\\
    & C_{0}(BG)=\spann\{(0)\}.
\end{split}
\end{equation*}
Note that under the action of the group $\mathbb{Z}/2$, we have the identifications $(0) \sim (1)$, $(00) \sim (11)$, $(01) \sim (10)$, and so on. It follows that
\begin{equation*}
  Z_{1}(BG)=\spann\{(00),(01)\},\quad B_{1}(BG)=\spann\{(00)\}.
\end{equation*}
Thus, we obtain $H_{1}(\mathbb{R}P^{\infty}; \mathbb{K}) = \mathbb{K}$. The homology in other dimensions can be calculated similarly.

\section{Topological methods on sequences}\label{section:method}

In this section, we construct a filtration function on the space $EX$ in order to develop a theory of topological persistence for sequences.
The key idea is to encode the statistical or structural information arising from a sequence into a real-valued function, which in turn induces a filtration on $EX$.
This construction enables a systematic multi-scale analysis of the sequence via tools from topological data analysis, such as persistent homology and persistent Laplacians.

\subsection{Topological sequence analysis   of genome sequences}\label{section:persistent_homology}

Let $X$ be a non-empty finite set. Let $EX_{n}$ be the set of all the $(n+1)$-tuples $(x_{0}, x_{1}, \dots, x_{n})$ where each $x_{i}$ is an element in $X$. For any fixed $n\geq 1$ and  $0 \leq i \leq n$, we have the face map $d_{i}: EX_{n} \to EX_{n-1}$ defined by
\begin{equation*}
  d_{i}(x_{0},x_{1},\dots,x_{n}) = (x_{0},\dots,x_{i-1},x_{i+1},\dots,x_{n}).
\end{equation*}
One can verify that $d_{i}d_{j}=d_{j}d_{i-1}$ for $i<j$. The graded set $EX=\{EX_{n}\}_{n\geq 0}$ forms a $\Delta$-complex.

Let $f: EX \to \mathbb{R}$ be a real-valued function. We say that $f$ is \emph{face-preserving} if for any $x \in EX_{n}$ and any $0 \leq i \leq n$, the following inequality holds:
\begin{equation*}
f(d_{i}x) \leq f(x),
\end{equation*}
where $d_{i}$ denotes the $i$-th face map. Moreover, for real numbers $a \leq b$, there is an inclusion of $\Delta$-complexes
\begin{equation*}
  j^{a,b}:f^{-}(a)\hookrightarrow f^{-}(b).
\end{equation*}
This inclusion induces a morphism of homology groups
\begin{equation*}
  j^{a,b}_{n}:H_{n}(f^{-}(a))\rightarrow H_{n}(f^{-}(b)),\quad n \geq 0.
\end{equation*}
Thus, analogous to standard persistent homology theory, we define the image of the map $j^{a,b}_{n}$ as the persistent homology, representing homology generators that persist from time $a$ to time $b$.

\begin{definition}
The $(a,b)$-persistent homology of the function $f: EX \to \mathbb{R}$ is defined as
\begin{equation*}
H^{a,b}_{n}(X,f) \coloneqq \im \left( j^{a,b}_{n}: H_{n}(f^{-}(a)) \rightarrow H_{n}(f^{-}(b)) \right), \quad n \geq 0,
\end{equation*}
where $j^{a,b}_{n}$ is the map between homology groups induced by the inclusion $j^{a,b}: f^{-}(a) \hookrightarrow f^{-}(b)$.
\end{definition}
The persistent Betti number is denoted by $\beta_{n}^{a,b}=\rank H^{a,b}_{n}(X,f)$, which encodes the topological features of the sequence.

It is evident that in order to obtain the aforementioned persistent homology, the main challenge lies in finding a face-preserving function $f: EX \to \mathbb{R}$. In the following, we will construct such a function derived from a sequence. For consistency, from now on, the length of a tuple $(x_{0}, x_{1}, \dots, x_{k})$ or a sequence $x_{0}x_{1}\dots x_{k}$ is defined as $k$, which is the number of elements in the tuple or sequence minus 1.
Let $\xi$ be a sequence of finite length with elements in $X$. For any tuple $\sigma = (x_{0},x_{1},\dots,x_{k})\in EX$, \emph{a path $P(\sigma)$ of $\sigma$ in $\xi$} is a subsequence
\begin{equation*}
  x_{0}y_{1}\cdots y_{r_{1}} x_{1} y_{r_{1}+1} \cdots y_{r_{2}} x_{2}y_{r_{2}+1} \cdots  y_{r_{k-1}}x_{k-1} y_{r_{k-1}+1}\cdots y_{r_{k}+1} x_{k}.
\end{equation*}
We construct $f$ on a sequence $\xi$ as follows. For $\sigma = (x_0, \dots, x_k) \in EX$, define
\begin{equation*}
  \ell(\sigma) = \inf\limits\mathrm{length}(P(\sigma)),
\end{equation*}
where $P(\sigma)$ runs across all the paths of $\sigma$ in $\xi$. Here, $\mathrm{length}(P(\sigma))$ denotes the length of the path $P(\sigma)$. If there is no path of $\sigma$ in $\xi$, we set $\ell(\sigma)=\infty$.

For a tuple $\sigma = (x_{0},x_{1},\dots,x_{k})\in EX$, let $P(\sigma)$ be a shortest path of $\sigma$ in $\xi$. Then $P(\sigma)$ is also a path of the face $\partial_{i}(\sigma)$ of $\sigma$ in $\xi$. For $1\leq i\leq k-1$, we have that
\begin{equation*}
  \ell(\partial_{i}\sigma)\leq \mathrm{length}(P(\sigma))= \ell(\sigma).
\end{equation*}
For $\partial_{0}\sigma$ and $\partial_{k}\sigma$, we observe that $\ell(\partial_{i}\sigma) < \mathrm{length}(P(\sigma)) = \ell(\sigma)$. Thus, the map $\ell:EG\to \mathbb{Z}$ is a face-preserving function.

\begin{example}\label{example:cov}
SARS-CoV-2, the virus responsible for the COVID-19 pandemic, is a pathogen that affects humans through respiratory infection and has triggered a global health crisis. Accurate diagnosis of SARS-CoV-2 infection is crucial for controlling the spread of the virus, and in this process, primers play an essential role. Primers are short DNA fragments used in PCR tests to specifically amplify certain gene sequences of the virus, aiding in the rapid identification of its presence. Among the many primers developed, ``N-China-F'' stands out as the most widely used primer in China. This primer is designed to target the nucleocapsid (N) gene of SARS-CoV-2, a gene that plays a central role in the virus's replication and infectivity. Known for its high sensitivity and specificity, the ``N-China-F'' primer is extensively utilized in COVID-19 diagnostic testing, providing robust support for rapid detection and epidemic control. The expression of this sequence is as follows:
\begin{equation*}
\text{N-China-F: } \text{GGGG AAC TTCT CCTG CTAG AAT}.
\end{equation*}
This sequence is 22 nucleotides long composed of the nucleotide bases adenine (A), cytosine (C), guanine (G), and thymine (T), which are the building blocks of DNA. By a straightforward calculation, we have
\begin{equation*}
  \begin{split}
      & \mathrm{\ell(A)=\ell(C)=\ell(G)=\ell(T)= 0;}\\
      & \mathrm{\ell(AA)= \ell(AC)= \ell(AG)= \ell(AT)= 1;}\\
      & \mathrm{\ell(CA) = 2, \ell(CC) = 1, \ell(CG) = 2, \ell(CT) = 1;}\\
      & \mathrm{\ell(GA) = 1, \ell(GC) = 1, \ell(GG) = 1, \ell(GT) = 2;}\\
      & \mathrm{\ell(TA) = \ell(TC) = \ell(TG) = \ell(TT) = 1.}
  \end{split}
\end{equation*}
Similarly, we can compute pieces of greater length, for instance, $\ell(\mathrm{AGC})=10$, $\ell(\mathrm{CCC})=3$ and $\ell(\mathrm{CAC})=\infty$. As shown in Figure \ref{figure:bars}\textbf{a},
in dimension 0, there are four generator bars, all born at time 0, with three ending at time 1. In dimension 1, there are 10 generators born at time 1, with homology representatives given by \begin{equation*}
\begin{split}
    & \mathrm{AA,\quad CC,\quad GG,\quad TT,\quad AG+GA,\quad AT+TA,\quad CT+TC}, \\
    & \mathrm{AC+CT+TA},\quad \mathrm{GC+CT+TG},\quad \mathrm{TG+GA+AT}.
\end{split}
\end{equation*}
Here, the representatives $\mathrm{AA, CC, GG,}$ and $\mathrm{TT}$ are annihilated at time $\ell=2$ by the elements $\mathrm{AAG}$, $\mathrm{CCT}$, $\mathrm{GGA}$, and $\mathrm{CTT}$, respectively. The representatives $\mathrm{AG+GA}$ and $\mathrm{CT+TC}$ are annihilated by $\mathrm{AGA + AAG}$ and $\mathrm{TCT + CTT}$ at $\ell=2$, respectively.
Only the representative $\mathrm{AT+TA}$ is annihilated at time 3 by the element $\mathrm{GAT+GTA}$. Here, note that $\mathrm{\ell(GAT) = \ell(GTA) = 3}$.
Note that the generators $\mathrm{CG+GC}$ and $\mathrm{GT+TG}$ are born at $\ell=2$ but also die at $\ell=2$. This occurs because $\mathrm{CG+GC}$ and $\mathrm{GT+TG}$ are annihilated by the generators $\mathrm{TGC - CTG + CTC + CCT}$ and $\mathrm{TGC - GCT + CTC + CCT}$, respectively. Here, we have $\ell(\mathrm{TGC}) = \ell(\mathrm{GCT}) = \ell(\mathrm{CTG}) = 2$. Thus, $\mathrm{CG+GC}$ and $\mathrm{GT+TG}$ are not representatives of generators of persistent homology. Besides, the cycle $\mathrm{GC+CT+TG}$ is annihilated by $\mathrm{TGC+CTC+CCT}$ at time $\ell=2$.
On the other hand, the cycle $\mathrm{AC+CA}$ is born at time 2. But $\mathrm{AC+CA-AT-TA}$ is annihilated by $\mathrm{ACT-CTA}$, where $\ell(\mathrm{ACT}) = \ell(\mathrm{CTA})=2$. This means that $\mathrm{AC+CA}$ and $\mathrm{AT+TA}$ are in the same class at $\ell=2$. Similarly, the cycles $\mathrm{AC+CT+TA}$ and $\mathrm{AT+TA}$ are in the same homology class at $\ell=2$, differing by the boundary of $\mathrm{ACT}$. Likewise, the cycles $\mathrm{TG+GA+AT}$ and $\mathrm{AT+TA}$ are in the same class at $\ell=2$, differing by the boundary of $\mathrm{AGA+GAA-TAG}$. Thus, there is only one homology generator that persist up to $\ell=3$ with representative $\mathrm{AT+TA}$.

The above discussion shows the persistent Betti numbers
\begin{equation*}
  \beta_{0}^{0,1}=4,\quad \beta_{0}^{0,2}=\beta_{0}^{0,\infty}=1,\quad \beta_{1}^{1,2}=10,\quad \beta_{1}^{1,3}=1,\quad \beta_{1}^{1,4}= \beta_{1}^{1,\infty}= 0.
\end{equation*}
One can consider the further persistent topological features of the sequence.
\end{example}

\begin{figure}[htb!]
  \centering
  \includegraphics[width=0.9\textwidth]{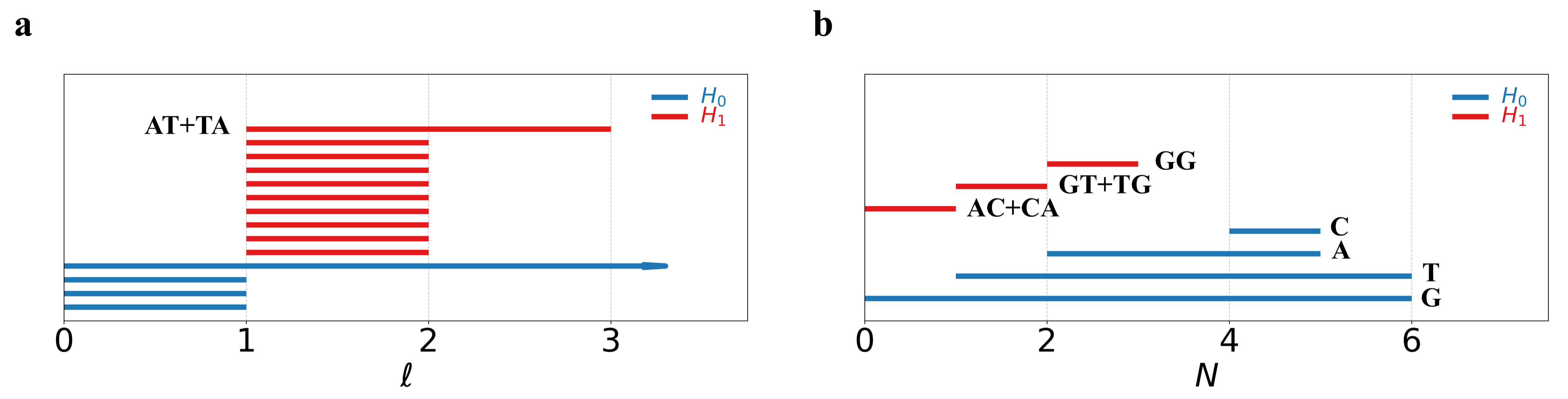}\\
  \caption{\textbf{a} The corresponding barcode based on the function $\ell:EG\to \mathbb{Z}$ on the sequence of ``N-China-F'' primer. \textbf{b} The corresponding barcode based on the function $N:EG\to \mathbb{Z}$ on the sequence of ``N-China-F'' primer.}\label{figure:bars}
\end{figure}

For a very long random sequence $\xi$ of length $L$ in a set $X$, consider a tuple $\sigma = (x_{0},x_{1},\dots,x_{k})\in EX$, where $k << L$. There almost always exists a contiguous subsequence $x_{0} x_{1} \cdots x_{k}$ in $\xi$. In this case, we can focus on the first occurrence of the path of $\sigma$, and thus define
\begin{equation*}
  \ell_{1}(\sigma) = \text{length}(P_{1}(\sigma)),
\end{equation*}
where $P_{1}(\sigma)$ denotes the first occurrence of the path of $\sigma$. Calculating $\ell_{1}(\sigma)$ is straightforward because we only need to detect the elements of $\sigma$ one by one in the sequence $\xi$, recording the position of the first and last elements to compute the length $\ell_{1}(\sigma)$. It can be verified that the map $\ell_{1}:EG \to \mathbb{N}$ is also a face-preserving function. This allows for the subsequent computation of persistent topological features.

\subsection{Persistent Laplacians on sequences}

The persistent Laplacians offer a spectral framework for analyzing topological features that persist across scales in a filtration of simplicial complexes \cite{chen2021evolutionary,liu2024algebraic,memoli2022persistent,wang2020persistent}. The construction of Laplacians on simplicial complexes is standard. In this section, we define the persistent Laplacians on the sequences and conduct the computations.

Let $K$ be a $\Delta$-complex. Let $C_{n}(K)$ be the linear space generated by the elements in $K_{n}$ over the real number field $\mathbb{R}$. Then $C_{\ast}(K)$ is a chain complex with the boundary map $\partial_{n}:C_{n}(K)\to C_{n-1}(K)$ for $n\geq 0$. There is a canonical inner product structure on $C_{\ast}(K)$ given by
\begin{equation*}
 \langle\sigma,\tau\rangle=\left\{
                                                                                                                                                           \begin{array}{ll}
                                                                                                                                                             1, & \hbox{$\sigma=\tau$;} \\
                                                                                                                                                             0, & \hbox{otherwise.}
                                                                                                                                                           \end{array}
                                                                                                                                                         \right.
\end{equation*}
The adjoint operator $(\partial_{n})^{\ast}:C_{n-1}(K)\to C_{n}(K)$ of $\partial_{n}$ is the unique linear operator satisfying
\begin{equation*}
  \langle \partial_{n}\sigma,\tau\rangle = \langle \sigma,(\partial_{n})^{\ast}\tau\rangle
\end{equation*}
for any $\sigma\in C_{n}(K)$ and $\tau\in C_{n-1}(K)$. The \emph{combinatorial Laplacian} of $K$ is defined by
\begin{equation*}
  \Delta_{n} = (\partial_{n})^{\ast}\circ\partial_{n}+\partial_{n+1}\circ(\partial_{n+1})^{\ast},\quad n\geq 1
\end{equation*}
and $\Delta_{0}=\partial_{1}\circ\partial_{1}^{\ast}$. It is clear that the operator $\Delta_{n}:C_{n}(K)\to C_{n}(K)$ is self-adjoint and semi-positive definite. Consequently, the eigenvalues of $\Delta_{n}$  are non-negative. The number of eigenvalues equal to $0$ corresponds to the Betti number of the $\Delta$-complex $K$. Furthermore, we have
\begin{equation*}
  \ker \Delta_{n} = H_{n}(K),\quad n\geq 0.
\end{equation*}
The nonzero eigenvalues of $\Delta_{n}$ encode the non-harmonic information of $K$.

Let $X$ be a non-empty finite set. Let $f: EX \to \mathbb{R}$ be a face-preserving real-valued function. Recall the sub $\Delta$-complex $f^{-}(a) = \{\sigma \in EX \mid f(\sigma) \leq a\}$ of $EX$. Let us denote $C_{n}^{a} = C_{n}(f^{-}(a))$. Then, for real numbers $a\leq b$, one has an inclusion of chain complexes
\begin{equation*}
   \mathfrak{j}_{\ast}^{a,b}:C_{\ast}^{a}\hookrightarrow C_{\ast}^{b}.
\end{equation*}

Let $C_{n+1}^{a,b}=\{x\in C_{n+1}^{b}|\partial_{n}^{b}x\in C_{n}^{a}\}$ be the set of elements in $C_{n+1}^{b}$ whose image of $\partial_{n}^{b}$ are in $C_{n}^{a}$. Here, $\partial_{n}^{b}$ is the boundary operator of $C_{\ast}^{b}$. Then we have a linear operator
\begin{equation*}
  \partial^{a,b}_{n}: C_{n+1}^{a,b}\to C_{n}^{a},\quad \partial^{a,b}_{n}(x)= \partial^{b}_{n}x.
\end{equation*}
The $(a,b)$-persistent Laplacian $\Delta^{a,b}_{n}:C_{a}\to C_{a}$ is given by
\begin{equation*}
  \Delta_{n}^{a,b} = \partial_{n+1}^{a,b}\circ (\partial_{n+1}^{a,b})^{\ast} + (\partial_{n}^{a})^{\ast}\circ \partial_{n}^{a}.
\end{equation*}

It is important to note that the kernel of the operator $\Delta_{n}^{a,b}$ is isomorphic to the $(a,b)$-persistent homology group $H_{n}^{a,b}(X,f)$ for any pair of parameters $a \leq b$. This implies that the harmonic part of the persistent Laplacian encapsulates the persistent homology information, while its non-harmonic part, i.e., the non-zero eigenvalues of the operator $\Delta_{n}^{a,b}$ reflects the non-harmonic spectral information of the simplicial complex as the filtration parameter evolves. The smallest positive eigenvalue $\lambda^{a,b}$ of the operator $\Delta_{n}^{a,b}$, known as the spectral gap, is a crucial spectral feature, reflecting certain intrinsic structural information of the sequence. In data analysis, computing the spectral feature at each time step, $\lambda^{a} = \lambda^{a,a}$, is particularly convenient and provides excellent visualization capabilities. In our data processing, we typically focus on the variation curve of the spectral feature, denoted as $\lambda(t) = \lambda^{t}$.

\begin{example}
In Example \ref{example:cov}, we analyzed the most widely used primer, ``N-China-F,'' in China. Figures \ref{figure:primers} presents the 0 to 3-dimensional Betti curves, as well as the spectral gap curves, for the ``N-China-F'' sequence. In addition, the most commonly used COVID-19 detection probes in the United States are the ``N1-U.S.-P,'' ``N2-U.S.-P,'' and ``N3-U.S.-P.'' These probes are widely utilized in PCR-based diagnostics for detecting SARS-CoV-2. Their corresponding sequences are given as follows\cite{wang2020mutations}:
\begin{equation*}
\begin{split}
    & \text{N1-U.S.-P: } \text{ACCCC GCATT ACGTT TG}, \\
    & \text{N2-U.S.-P: } \text{ACAAT TTGCC CCCAG CGCTT CAG}, \\
    & \text{N3-U.S.-P: } \text{ATCAC ATTGG CACCC GCAAT CCTG}.
\end{split}
\end{equation*}
These sequences target highly conserved regions of the SARS-CoV-2 genome, ensuring sensitivity and specificity in diagnostic applications. They are critical components of the CDC's recommended testing protocols in the U.S.
\begin{figure}[htb!]
  \centering
  \includegraphics[width=0.9\textwidth]{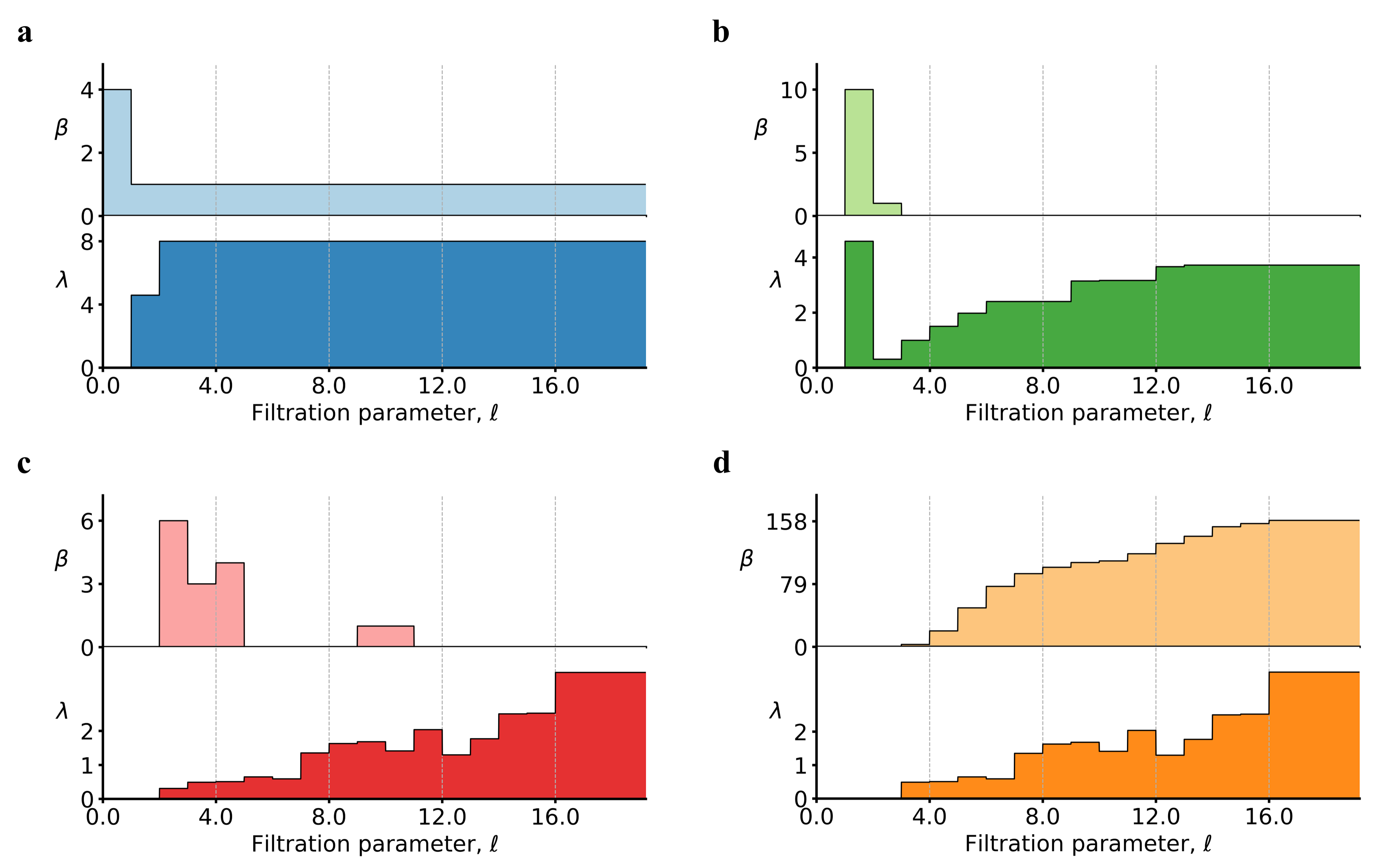}\\
  \caption{\textbf{a} The 0-dimensional Betti curve and spectral gap curve for the ``N-China-F'' sequence. \textbf{b} The 1-dimensional Betti curve and spectral gap curve for the ``N-China-F'' sequence. \textbf{c} The 2-dimensional Betti curve and spectral gap curve for the ``N-China-F'' sequence. \textbf{d} The 3-dimensional Betti curve and spectral gap curve for the ``N-China-F'' sequence.}\label{figure:primers}
\end{figure}
Figure \ref{figure:primers2} displays the Betti curves and spectral gap curves for the ``N1-U.S.-P,'' ``N2-U.S.-P,'' and ``N3-U.S.-P'' sequences. These curves reveal noticeable differences, either in 0-dimensional or 1-dimensional features, suggesting that each sequence captures distinct topological characteristics.
\begin{figure}[htb!]
  \centering
  \includegraphics[width=0.9\textwidth]{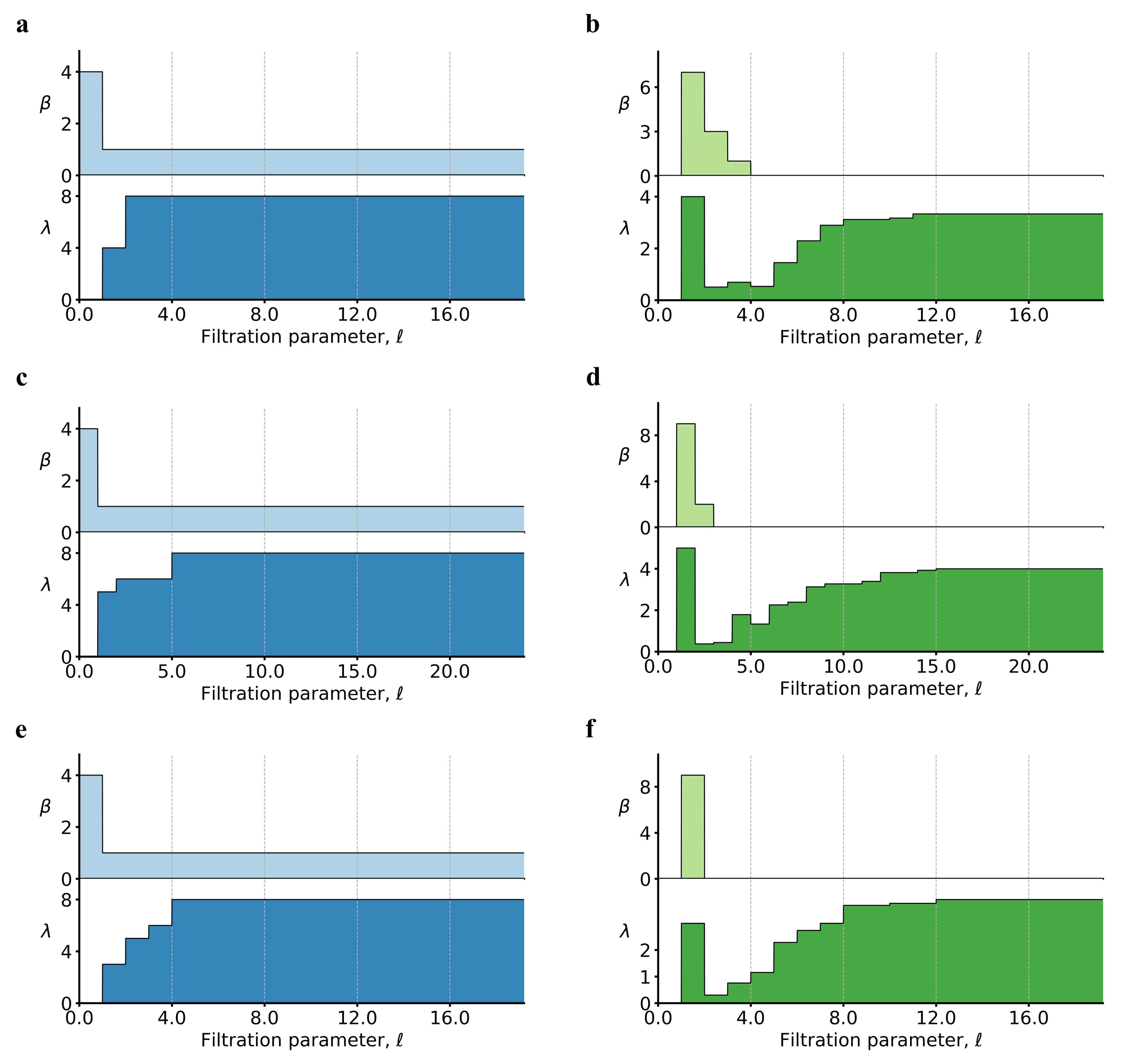}\\
  \caption{\textbf{a} The 0-dimensional Betti curve and spectral gap curve for the ``N1-U.S.-P'' sequence. \textbf{b} The 1-dimensional Betti curve and spectral gap curve for the ``N1-U.S.-P'' sequence. \textbf{c} The 0-dimensional Betti curve and spectral gap curve for the ``N2-U.S.-P'' sequence. \textbf{d} The 1-dimensional Betti curve and spectral gap curve for the ``N2-U.S.-P'' sequence. \textbf{e} The 0-dimensional Betti curve and spectral gap curve for the ``N3-U.S.-P'' sequence. \textbf{f} The 1-dimensional Betti curve and spectral gap curve for the ``N3-U.S.-P'' sequence.}\label{figure:primers2}
\end{figure}
\end{example}

\subsection{Persistent homology on $\Delta$-closure}

Let $f: EX \to \mathbb{R}$ be a function, which is not necessarily face-preserving. We can construct a modification of $f$ to obtain a face-preserving function. Define the function $\widetilde{f}: EX \to \mathbb{R}$ as follows. For any $\sigma\in EX$, set
\begin{equation*}
  \widetilde{f}(\sigma) = \max(\sup_{\tau} f(\tau), f(\sigma)),
\end{equation*}
where $\tau$ runs over all faces of $\sigma$. Then $\widetilde{f}$ is a face-preserving function.

Let $S$ be a collection of elements in $EX$. It is worth noting that $S$ does not have to be a $\Delta$-complex. Define
\begin{equation*}
  \Delta S = \{\tau \in EX \mid \text{$\tau$ is a face of $\sigma$ for some $\sigma \in S$}\},
\end{equation*}
which is called the $\Delta$-closure of $S$. Then $\Delta S$ is a $\Delta$-complex. For a real-valued function $f: EX \to \mathbb{R}$, let $f^{-}(a) \subseteq EX$ be a set. Then $\Delta f^{-}(a)$ is a $\Delta$-complex. It is easy to see that, for real numbers $a \leq b$, there is an inclusion of $\Delta$-complexes
\begin{equation*}
  \Delta f^{-}(a) \hookrightarrow \Delta f^{-}(b),
\end{equation*}
which induces a homomorphism on homology groups
\begin{equation*}
  H_{\ast}(\Delta f^{-}(a)) \rightarrow H_{\ast}(\Delta f^{-}(b)).
\end{equation*}

Let $H_{\ast}^{a,b}(f) = \im (H_{\ast}(\Delta f^{-}(a)) \rightarrow H_{\ast}(\Delta f^{-}(b)))$. Then $H_{\ast}^{a,b}(f)$ can be seen as a variant of persistent homology, which can be defined for non-face-preserving functions. In particular, if $f: EX \to \mathbb{R}$ is face-preserving, then the definition of $H_{\ast}^{a,b}(f)$ coincides with the $(a,b)$-persistent homology of the function $f: EX \to \mathbb{R}$ as introduced in Section \ref{section:persistent_homology}.

\begin{proposition}
For any real numbers $a \leq b$ and $n \geq 0$, we have
\begin{equation*}
  H_{n}^{a,b}(f) \cong H_{n}^{a,b}(\widetilde{f}),\quad n \geq 0.
\end{equation*}
\end{proposition}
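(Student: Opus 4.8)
The plan is to reduce the statement to a one-parameter comparison of $\Delta$-complexes and then conclude by a short diagram chase. Since $\widetilde f$ is face-preserving, each sublevel set $\widetilde f^{-}(a)=\{\sigma\in EX : \widetilde f(\sigma)\le a\}$ is closed under faces, hence is a sub-$\Delta$-complex of $EX$; therefore $\Delta\widetilde f^{-}(a)=\widetilde f^{-}(a)$ and $H_{n}^{a,b}(\widetilde f)=\im\big(H_{n}(\widetilde f^{-}(a))\to H_{n}(\widetilde f^{-}(b))\big)$. Because $\widetilde f\ge f$ pointwise, $\widetilde f^{-}(a)\subseteq f^{-}(a)\subseteq\Delta f^{-}(a)$, and, being closed under faces, $\widetilde f^{-}(a)$ is a sub-$\Delta$-complex of $\Delta f^{-}(a)$. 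These inclusions are natural in the parameter, so for $a\le b$ we obtain a commuting square, and applying $H_{n}$ gives
\begin{equation*}
\begin{array}{ccc}
H_{n}(\widetilde f^{-}(a)) & \xrightarrow{\,g\,} & H_{n}(\widetilde f^{-}(b))\\
\phi_{a}\big\downarrow & & \big\downarrow\phi_{b}\\
H_{n}(\Delta f^{-}(a)) & \xrightarrow{\,G\,} & H_{n}(\Delta f^{-}(b)),
\end{array}
\end{equation*}
with $\phi_{a},\phi_{b}$ induced by inclusion. If $\phi_{a}$ and $\phi_{b}$ are isomorphisms, then $\im G=\im(G\phi_{a})=\im(\phi_{b}g)=\phi_{b}(\im g)$ by surjectivity of $\phi_{a}$, and injectivity of $\phi_{b}$ yields $\im g\cong\im G$; that is, $H_{n}^{a,b}(\widetilde f)\cong H_{n}^{a,b}(f)$.

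Everything therefore reduces to the claim that, for every $a\in\mathbb{R}$ and every $n\ge 0$, the inclusion $\iota_{a}\colon\widetilde f^{-}(a)\hookrightarrow\Delta f^{-}(a)$ induces an isomorphism on $H_{n}$. To prove this I would show that $\Delta f^{-}(a)$ collapses onto $\widetilde f^{-}(a)$, equivalently that the relative chain complex $C_{\ast}(\Delta f^{-}(a))/C_{\ast}(\widetilde f^{-}(a))$ is acyclic. Its generators are precisely the simplices $\sigma$ that are faces of some sublevel simplex but for which $\sigma$ itself, or one of its iterated faces, has $f$-value exceeding $a$; these are the simplices that lie in the $\Delta$-closure of $f^{-}(a)$ but outside the largest subcomplex contained in $f^{-}(a)$. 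The idea is to organize these ``extra'' simplices into an acyclic discrete Morse matching whose critical cells are exactly the simplices of $\widetilde f^{-}(a)$, pairing a simplex with a coface or a face obtained by inserting or deleting a distinguished coordinate, and to make this matching depend coherently on $a$ so that it is compatible with $\iota_{a}$; a contracting homotopy of the relative complex then follows. Since $EX$ is a $\Delta$-complex rather than a simplicial complex (tuples may repeat entries), the matching must be described directly in terms of the face operators $d_{i}$, and the identities $d_{i}d_{j}=d_{j}d_{i-1}$ must be used carefully when checking that it is acyclic.

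The main obstacle is exactly this claim. For an arbitrary, not necessarily face-preserving $f$, it is not clear a priori that adjoining all lower faces of the sublevel simplices --- that is, passing from $\widetilde f^{-}(a)$ up to $\Delta f^{-}(a)$ --- leaves the homology unchanged, so the real work lies in producing the acyclic matching explicitly and in verifying its naturality in $a$. Once this is done, the remaining ingredients (naturality of the inclusions, the identification $\Delta\widetilde f^{-}(a)=\widetilde f^{-}(a)$, and the diagram chase above) are routine, and the proposition follows for all $n\ge 0$ and all $a\le b$.
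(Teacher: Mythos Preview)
The paper takes a much shorter route than you do: it argues that $\widetilde f^{-}(a)=\Delta f^{-}(a)$ as $\Delta$-complexes for every $a$, after which the two persistence modules coincide on the nose and no homological comparison is needed. Your setup is more careful—you correctly note that, with the stated definitions, one only has the chain of inclusions $\widetilde f^{-}(a)\subseteq f^{-}(a)\subseteq\Delta f^{-}(a)$, since $\widetilde f^{-}(a)$ is the \emph{largest} sub-$\Delta$-complex contained in the sublevel set $f^{-}(a)$, while $\Delta f^{-}(a)$ is the \emph{smallest} one containing it.

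Unfortunately the step you flag as the main obstacle is a genuine one, and your proposed discrete Morse attack cannot succeed in general. Take $X=\{x,y\}$; let $f$ vanish on the vertices and on $(x,y),(y,x)$, take the value $10$ on $(x,x)$ and $(y,y)$, the value $5$ on every $2$-simplex, and $+\infty$ in higher dimensions. Then $\widetilde f^{-}(5)=\{x,y,(x,y),(y,x)\}$ is a circle with $H_{1}\cong\mathbb{K}$, whereas $\Delta f^{-}(5)$ is the full $2$-skeleton of $EX$ and has $H_{1}=0$ (for instance $\partial\bigl((x,y,x)+(x,x,x)\bigr)=(x,y)+(y,x)$). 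Hence $H_{1}^{5,5}(\widetilde f)\not\cong H_{1}^{5,5}(f)$, the inclusion $\iota_{5}$ is not a homology isomorphism, and no acyclic matching collapsing $\Delta f^{-}(5)$ onto $\widetilde f^{-}(5)$ can exist. In other words, the proposition as literally stated (and hence the paper's argument for it) seems to require a corrected definition of $\widetilde f$—one taken over cofaces rather than faces, so that $\widetilde f^{-}(a)$ and $\Delta f^{-}(a)$ genuinely agree—and until that is fixed your strategy cannot be completed either.
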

\begin{proof}
If $\sigma\in \widetilde{f}^{-}(a)$, then we have $\widetilde{f}(\sigma)\geq a$. It follows that
\begin{equation*}
   \max(\sup_{\tau} f(\tau), f(\sigma))\geq a.
\end{equation*}
Here, $\tau$ runs over all faces of $\sigma$. If $f(\sigma)\geq a$, we have $\sigma\in f^{-}(a)\subseteq \Delta f^{-}(a)$. If $f(\tau)\geq a$ for some face $\tau$ of $\sigma$, we have $\tau\in \Delta f^{-}(a)$. It follows that $\widetilde{f}^{-}(a)\subseteq\Delta f^{-}(a)$. On the other hand, if $\sigma\in \Delta f^{-}(a)$, then there exists a face $\tau$ of $\sigma$ such that $f(\tau)\geq a$. Thus, one has $\sigma\in \widetilde{f}^{-}(a)$. This shows $\Delta f^{-}(a)\subseteq \widetilde{f}^{-}(a)$. Hence, we have $\widetilde{f}^{-}(a)=\Delta f^{-}(a)$.

Note that $\widetilde{f}^{-}(a)\hookrightarrow \widetilde{f}^{-}(b)$ and $\Delta f^{-}(a)\hookrightarrow \Delta f^{-}(b)$ induced by $a\to b$ are inclusions of $\Delta$-complexes. There is a natural isomorphism $F_{a}:H_{\ast}(\widetilde{f}^{-}(a))\to H_{\ast}(\Delta f^{-}(a))$. It implies the desired isomorphism.
\end{proof}

\subsection{Persistent path homology on sequences}

Let $\xi$ be a finite sequence with elements in $X$. For any tuple $\sigma \in EX$, we can regard $\sigma$ as a part of the sequence by converting $\sigma=(x_{0}, x_{1}, \dots, x_{n})$ into $\langle\sigma \rangle=x_{0}x_{1} \cdots x_{n}$. Define $N(\sigma)$ as the number of occurrences of $\sigma$ in a sequence $\xi$. This defines a function
\begin{equation*}
  N: EX \to \mathbb{Z}, \quad \sigma \mapsto N(\sigma).
\end{equation*}

While $N$ is a well-defined function, it is not necessarily face-preserving. Therefore, for any non-negative integer $a$, the set $N^{+}(a) = \{\sigma \in EX \mid N(\sigma) \geq a\}$ is not necessarily a $\Delta$-complex.

Recall the definition of a path complex: a path complex is a collection of sequences $P=\{P_{n}\}_{n\geq 0}$ on a finite set $X$ such that if $x_{0}x_{1} \cdots x_{n} \in P_{n}$, then $x_{0}x_{1} \cdots x_{n-1} \in P_{n-1}$ and $x_{1} \cdots x_{n} \in P_{n-1}$. Here, $P_{n}$ is the set of paths of $n+1$ elements. Let $P$ be a path complex on a finite set $X$. Let $\mathcal{A}_{n}(P)$ be the $\mathbb{K}$-linear space generated by the elements in $P_{n}$. Recall that $C_{\ast}(EX)$ is a chain complex.
The space $\mathcal{A}_{n}(P)$ can be regarded as a subspace of $C_{n}(EX)$ for any $n \geq 0$, by identifying the representations $(x_{0}, x_{1}, \dots, x_{n})$ and $x_{0}x_{1} \cdots x_{n}$. Note that $\partial \mathcal{A}_{n}(P)\subseteq C_{n}(EX)$. Let
\begin{equation*}
  \Omega_{n}(P) = \{u\in \mathcal{A}_{n}(P)|\partial u\in \mathcal{A}_{n-1}(P)\},\quad n\geq 0.
\end{equation*}
Then $\Omega_{\ast}(P)$ is a chain complex. The \emph{path homology} of $P$ is defined as
\begin{equation*}
  H_{n}(P)= H_{n}(\Omega_{\ast}(P)),\quad \geq 0.
\end{equation*}

Path homology can be used to study the topological structure of directed graphs, particularly in capturing key structures within complex directed networks \cite{grigor2012homologies,grigor2015cohomology}. It has found numerous applications in analyzing the structure of complex systems and networks \cite{chen2023path,chowdhury2018persistent}.

Let $a$ be a positive integer. Note that if $x_{0}x_{1} \cdots x_{n} \in N^{+}(a)$, then $N(x_{0}x_{1} \cdots x_{n-1}) \geq N(x_{1}x_{2} \cdots x_{n}) \geq a$. It follows that $x_{0}x_{1} \cdots x_{n-1} \in N^{+}(a)$ and $x_{1} \cdots x_{n} \in N^{+}(a)$. Thus, we have the following result.

\begin{proposition}
The subset $N^{+}(a)$ of $EX$ is a path complex.
\end{proposition}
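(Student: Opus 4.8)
The plan is to simply unwind the definition of a path complex and verify its one closure axiom by hand; the core of the argument is already contained in the sentence immediately preceding the statement, and all that is really needed is the monotonicity of $N$ under restriction to a contiguous sub-word. I would write $N^{+}(a)=\{N^{+}(a)_{n}\}_{n\ge 0}$ with $N^{+}(a)_{n}=\{\sigma\in EX_{n}\mid N(\sigma)\ge a\}$, so that what has to be shown is: for $n\ge 1$, if $x_{0}x_{1}\cdots x_{n}\in N^{+}(a)$, then both $x_{0}x_{1}\cdots x_{n-1}\in N^{+}(a)$ and $x_{1}x_{2}\cdots x_{n}\in N^{+}(a)$.

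The key step is the following elementary bound. Fixing the convention that an \emph{occurrence} of a word $w_{0}w_{1}\cdots w_{m}$ in $\xi=\xi_{1}\xi_{2}\cdots\xi_{L}$ is an index $p$ with $\xi_{p}\xi_{p+1}\cdots\xi_{p+m}=w_{0}w_{1}\cdots w_{m}$, so that $N(w_{0}\cdots w_{m})$ counts such indices $p$, I would observe that for any contiguous sub-word $w'=w_{i}w_{i+1}\cdots w_{j}$ of $w=w_{0}\cdots w_{m}$ (where $0\le i\le j\le m$), the assignment $p\mapsto p+i$ sends occurrences of $w$ in $\xi$ to occurrences of $w'$ in $\xi$ and is injective; hence $N(w')\ge N(w)$. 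Applying this with $w=x_{0}x_{1}\cdots x_{n}$, once to the prefix $x_{0}\cdots x_{n-1}$ (that is, $i=0$, $j=n-1$) and once to the suffix $x_{1}\cdots x_{n}$ (that is, $i=1$, $j=n$), yields
\[
  N(x_{0}x_{1}\cdots x_{n-1})\ \ge\ N(x_{0}x_{1}\cdots x_{n})\ \ge\ a,
  \qquad
  N(x_{1}x_{2}\cdots x_{n})\ \ge\ N(x_{0}x_{1}\cdots x_{n})\ \ge\ a,
\]
so the prefix and the suffix both lie in $N^{+}(a)_{n-1}$. For $n=0$ there is nothing to check beyond $N^{+}(a)_{0}\subseteq X$, which is automatic. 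This is exactly the defining property of a path complex, so $N^{+}(a)$ is one.

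I do not expect any real obstacle here: the proposition is a direct verification. The only point to be careful about is fixing the precise meaning of ``occurrence of $\sigma$ in $\xi$'' as a contiguous substring indexed by its starting position, which is what makes the map $p\mapsto p+i$ well defined and injective; under a scattered-subsequence interpretation the statement would actually fail. I would also state the two independent bounds $N(\text{prefix})\ge N(\sigma)$ and $N(\text{suffix})\ge N(\sigma)$ rather than the chain $N(x_{0}\cdots x_{n-1})\ge N(x_{1}\cdots x_{n})\ge a$ appearing just before the statement, since $N$ of the prefix and $N$ of the suffix are counts of two distinct patterns of the same length and neither dominates the other in general; only the comparison of each with $N(\sigma)$ is needed, and that is what is true.
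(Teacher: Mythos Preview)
Your proposal is correct and follows exactly the paper's approach: the paper's entire proof is the sentence preceding the proposition, which observes that $N$ of the prefix and $N$ of the suffix are each at least $N(x_{0}\cdots x_{n})\ge a$, hence both lie in $N^{+}(a)$. Your closing remark is also apt: the paper writes the chain $N(x_{0}\cdots x_{n-1})\ge N(x_{1}\cdots x_{n})\ge a$, but only the two separate inequalities $N(\text{prefix})\ge N(\sigma)$ and $N(\text{suffix})\ge N(\sigma)$ are actually needed (and actually true in general), which is precisely what you prove.
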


For positive integers $a\leq b$, the $(a,b)$-persistent path homology of the sequence $\xi$ is
\begin{equation*}
  H^{a,b}_{n}(\xi) = \im \left(H_{n}(N^{+}(b))\to H_{n}(N^{+}(a))\right),\quad n\geq 0.
\end{equation*}
The persistent path homology can characterize, to some extent, the topological features and structural distribution of a sequence. Moreover, for sufficiently long sequences, we are more interested in the distribution of different pieces rather than the counting of those pieces. In this case, we define a real-valued function $p:EX \to \mathbb{R}$, which maps a piece $\sigma$ to the ratio $N(\sigma)/\ell(\xi)$, where $\ell(\xi)$ denotes the length of the sequence $\xi$. For a real number $a$, the construction $p^{+}(a) = \{\sigma \in EX \mid p(\sigma) \geq a\}$ forms a path complex. Furthermore, for real numbers $a \leq b$, the $(a,b)$-persistent path homology of the sequence $\xi$ can be obtained as
\begin{equation*}
H^{a,b}_{n}(\xi) = \im\left( H_{n}(p^{+}(b)) \to H_{n}(p^{+}(a)) \right), \quad n \geq 0.
\end{equation*}

\begin{example}\label{example:cov2}
Example \ref{example:cov} revisited. We first consider the function $N: EX \to \mathbb{Z}$ on the sequence
\begin{equation*}
  \xi=\mathrm{GGGGAACTTCTCCTGCTAGAAT}.
\end{equation*}
We obtain the following values:
\begin{equation*}
  \begin{split}
      & N(\mathrm{A})=5,N(\mathrm{C})=5,N(\mathrm{G})=6,N(\mathrm{T})=6, \\
      & N(\mathrm{AA})=2,N(\mathrm{AC})=1,N(\mathrm{AG})=1,N(\mathrm{AT})=1,\\
      & N(\mathrm{CA})=0,N(\mathrm{CC})=1,N(\mathrm{CG})=0,N(\mathrm{CT})=4,\\
      & N(\mathrm{GA})=2,N(\mathrm{GC})=1,N(\mathrm{GG})=3,N(\mathrm{GT})=0,\\
      & N(\mathrm{TA})=1,N(\mathrm{TC})=2,N(\mathrm{TG})=1,N(\mathrm{TT})=1.
  \end{split}
\end{equation*}
Note that in this example, the filtration parameter decreases from large to small values.
In dimension 0, there are two generators $\mathrm{G}$ and $\mathrm{T}$ born at $N=6$ and two generators $\mathrm{A}$ and $\mathrm{C}$ born at $N=5$. The generator $\mathrm{C}$ is annihilated by $\mathrm{CT}$ at $N=4$, and similarly, the generator $\mathrm{A}$ is annihilated by $\mathrm{GA}$ at $N=2$.

In dimension 1, the generator $\mathrm{GG}$ is born at $N=3$ and is annihilated by $\mathrm{GAA}$ at $N=2$. The generator $\mathrm{CT+TC}$ is born at $N=2$ and is annihilated by $\mathrm{CTC+TCC}$ at $N=1$. The generator $\mathrm{AC+CA}$ is born at $N=1$ and vanishes at $N=0$. Notably, all generators are annihilated at $N=0$. In this example, the generators $\mathrm{CT+TC}$, $\mathrm{CG+GC}$, and $\mathrm{GT+TG}$ belong to the same homology class. For instance, the boundary of $\mathrm{CTG - TGC}$ is $\mathrm{(CT + TC) - (CG + GC)}$, showing that $\mathrm{CT + TC}$ and $\mathrm{CG + GC}$ are in the same equivalence class. The corresponding barcode is shown in Figure \ref{figure:bars}\textbf{b}.
\end{example}

\subsection{Persistent homology on classifying spaces}
In certain cases, when considering sequences on a finite set $X$, the set $X$ may exhibit specific symmetries, particularly in the form of a finite group $G$. Such symmetries play a central role in shaping the combinatorial and topological properties of the sequences defined on $X$. To investigate these, we consider the corresponding classifying space, which serves as the foundation for constructing the persistent homology.

Let $G$ be a finite group. We have a classifying space $BG$ of $G$. Suppose $f:EG\to \mathbb{R}$ is a face-preserving function. We can obtain a real-valued function $\bar{f}:BG\to \mathbb{R}$ as follows. For $[\sigma]\in BG$, we set
\begin{equation}\label{equation:function}
  \bar{f}([\sigma]) = \frac{1}{|G|}\sum\limits_{g\in G}f(g\sigma).
\end{equation}
Here, $[\sigma]$ represents the equivalence class in the quotient space $BG=EG/G$.
For $\sigma'$ in the orbit $G\sigma$, we have $\sigma'=h\sigma$ for some $h\in G$. It follows that
\begin{equation*}
  \bar{f}([\sigma']) = \frac{1}{|G|}\sum\limits_{g\in G}f(gh\sigma) = \frac{1}{|G|}\sum\limits_{g'\in G}f(g'\sigma) =\bar{f}([\sigma]).
\end{equation*}
Thus the map $\bar{f}$ is well defined.
\begin{proposition}
Suppose $f:EG\to \mathbb{R}$ is a face-preserving function. Then the map $\bar{f}:BG\to \mathbb{R}$ is a face-preserving function.
\end{proposition}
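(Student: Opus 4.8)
The plan is to verify the face-preserving inequality $\bar{f}(d_i[\sigma]) \leq \bar{f}([\sigma])$ directly from the definition \eqref{equation:function}, reducing it to the corresponding inequality for $f$ on $EG$. First I would recall that the $\Delta$-complex structure on $BG = EG/G$ is the one inherited from $EG$, so the face maps on $BG$ satisfy $d_i[\sigma] = [d_i\sigma]$ for any representative $\sigma \in EG_n$; this is well-defined precisely because the $G$-action commutes with the face maps on $EG$, i.e. $d_i(g\sigma) = g(d_i\sigma)$. I would state this compatibility explicitly as the first step, since the whole argument hinges on being able to compute a face of an equivalence class by picking any representative.

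Next, fix $[\sigma] \in (BG)_n$ with representative $\sigma \in EG_n$ and fix $0 \leq i \leq n$. The key computation is
\begin{equation*}
  \bar{f}(d_i[\sigma]) = \bar{f}([d_i\sigma]) = \frac{1}{|G|}\sum_{g \in G} f\bigl(g(d_i\sigma)\bigr) = \frac{1}{|G|}\sum_{g \in G} f\bigl(d_i(g\sigma)\bigr),
\end{equation*}
where the second equality uses the definition of $\bar{f}$ applied to the representative $d_i\sigma$, and the third uses the $G$-equivariance of the face map. Now apply the hypothesis that $f$ is face-preserving on $EG$: for each $g \in G$ we have $f(d_i(g\sigma)) \leq f(g\sigma)$, since $g\sigma \in EG_n$ and $d_i(g\sigma)$ is one of its faces. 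Summing this inequality over all $g \in G$ and dividing by $|G|$ gives
\begin{equation*}
  \bar{f}(d_i[\sigma]) = \frac{1}{|G|}\sum_{g \in G} f\bigl(d_i(g\sigma)\bigr) \leq \frac{1}{|G|}\sum_{g \in G} f(g\sigma) = \bar{f}([\sigma]).
\end{equation*}
This establishes the desired inequality, and since $[\sigma]$ and $i$ were arbitrary, $\bar{f}$ is face-preserving.

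The only subtlety — and the step I would present most carefully — is the well-definedness of $d_i$ on $BG$ and the identity $d_i[\sigma] = [d_i\sigma]$; the paper has already asserted that $BG$ inherits a $\Delta$-complex structure from $EG$, so I would simply note that the face maps descend because the $G$-action is by $\Delta$-maps (commutes with all $d_i$), and that the preceding paragraph in the excerpt already verified $\bar f$ is well-defined on orbits, which is the analogous compatibility needed here. There is no real obstacle: once the equivariance $d_i(g\sigma) = g(d_i\sigma)$ is in hand, the inequality is an averaging of $|G|$ copies of the face-preserving inequality for $f$, and averaging preserves $\leq$. I would also remark that no special property of $G$ being finite is used beyond making the normalized sum $\frac{1}{|G|}\sum_{g}$ meaningful, which is exactly why finiteness was assumed in the setup.
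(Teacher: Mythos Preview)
Your proposal is correct and follows essentially the same approach as the paper's proof: both reduce the inequality $\bar{f}([d_i\sigma]) \leq \bar{f}([\sigma])$ to averaging the termwise inequalities $f(d_i(g\sigma)) \leq f(g\sigma)$ over $g \in G$, using that the $G$-action commutes with face maps. The paper phrases the equivariance as ``if $\tau$ is a face of $\sigma$ then $g\tau$ is a face of $g\sigma$,'' which is exactly your $d_i(g\sigma) = g(d_i\sigma)$; your write-up is simply more explicit about this step.
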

\begin{proof}
Let $\tau$ be a face of $\sigma$. Then $g\tau$ is a face of $g\sigma$. It follows that $f(g\tau)\leq f(g\sigma)$.
Thus we have
\begin{equation*}
  \bar{f}([\tau]) =\frac{1}{|G|}\sum\limits_{g\in G}f(g\tau)\leq \frac{1}{|G|}\sum\limits_{g\in G}f(g\sigma) = \bar{f}([\sigma]).
\end{equation*}
This implies the desired result.
\end{proof}

The construction of the function $\bar{f}:BG\to \mathbb{R}$ implies that the filtration defined on the $\Delta$-complex $EG$ can induce a corresponding filtration on the $\Delta$-complex $BG$. Specifically, let $\bar{f}^{-}(a) = \{[\sigma] \in BG \mid f([\sigma]) \leq a\}$. For any real numbers $a \leq b$, there is an induced morphism
\begin{equation*}
  \bar{f}^{-}(a) \hookrightarrow \bar{f}^{-}(b)
\end{equation*}
between $\Delta$-complexes. Thus, we can define the $(a,b)$-persistent homology on the classifying space as
\begin{equation*}
  H^{a,b}_{n}(G,f) \coloneqq \im \left(H_{n}(\bar{f}^{-}(a)) \rightarrow H_{n}(\bar{f}^{-}(b)) \right), \quad n \geq 0.
\end{equation*}
The persistent homology of the classifying space incorporates certain symmetry information from the sequence, which arises from the action of the group $G$. The fundamental group of the classifying space $BG$ is isomorphic to $G$, and the persistence on $BG$ can be used to capture geometric information on $G$ induced by the filtration function. Therefore, the persistent homology of the classifying space can also be applied to study the topological persistence of the group, or the topological persistence of some symmetry structure.

\begin{example}
The sequence in Example \ref{example:cov} is revisited in this example for comparison. We consider the set $\{\mathrm{A,C,G,T}\}$ as the group $G = \mathbb{Z}/4$ by mapping $\mathrm{A} \to 0$, $\mathrm{C} \to 1$, $\mathrm{T} \to 2$, and $\mathrm{G} \to 3$. Then the elements of $BG$ of length $\leq 2$ are
\begin{equation*}
\begin{split}
    & \overline{0},\overline{00},\overline{01},\overline{02},\overline{03}, \\
    & \overline{000},\overline{001},\overline{002},\overline{003},\\
    & \overline{010},\overline{011},\overline{012},\overline{013},\\
    &  \overline{020},\overline{021},\overline{022},\overline{023},\\
    & \overline{030},\overline{031},\overline{032},\overline{033}.
\end{split}
\end{equation*}
By \eqref{equation:function}, we have the assignment of real numbers
\begin{equation*}
  \begin{split}
      & \mathrm{\bar{\ell}(\overline{0})= 0;}\\
      & \mathrm{\bar{\ell}(\overline{00})= 1, \bar{\ell}(\overline{01})= 1, \bar{\ell}(\overline{02})= 5/4,\bar{\ell}(\overline{03})= 3/2;}\\
      & \mathrm{\bar{\ell}(\overline{000}) = 11/4,\bar{\ell}(\overline{001}) = 5/2,\bar{\ell}(\overline{002}) = 7/2,\bar{\ell}(\overline{003}) = 11/2,;}\\
      & \mathrm{\bar{\ell}(\overline{010}) = 21/4,\bar{\ell}(\overline{011}) = 13/4,\bar{\ell}(\overline{012}) = 5/2,\bar{\ell}(\overline{013}) = 4;}\\
      & \mathrm{\bar{\ell}(\overline{020}) = 6,\bar{\ell}(\overline{021}) = 13/4,\bar{\ell}(\overline{022}) = 9/2,\bar{\ell}(\overline{023}) = \infty,;}\\
      & \mathrm{\bar{\ell}(\overline{030}) = \infty,\bar{\ell}(\overline{031}) = 23/4,\bar{\ell}(\overline{032}) = 2,\bar{\ell}(\overline{033}) = 17/4;}\\
  \end{split}
\end{equation*}
\begin{figure}[htb!]
  \centering
  \includegraphics[width=0.9\textwidth]{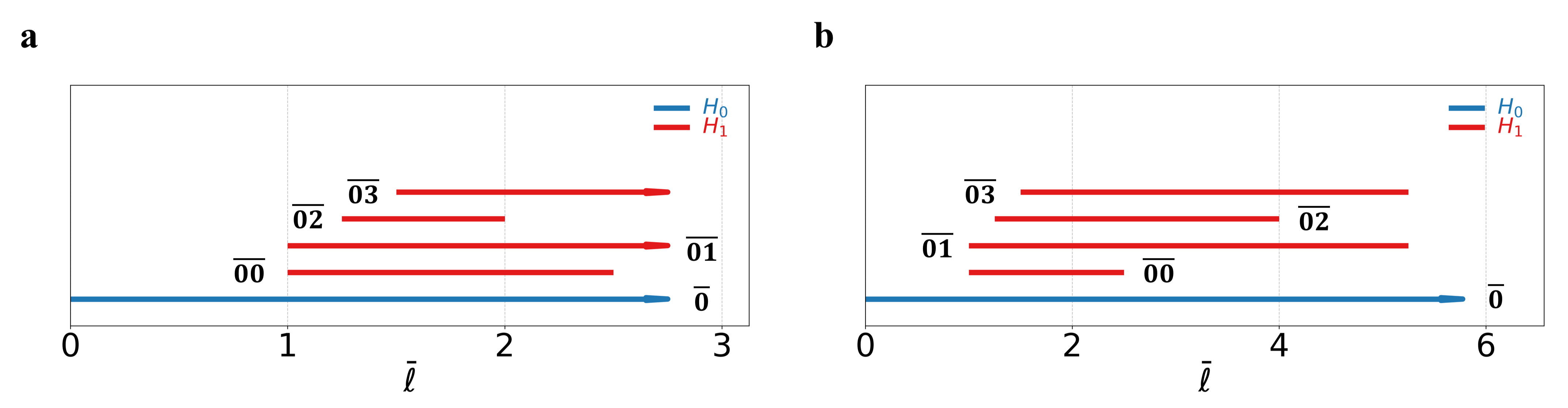}\\
  \caption{The corresponding barcode based on the function $\ell:EG\to \mathbb{R}$ on the sequence of ``N-China-F'' primer.}\label{figure:bars2}
\end{figure}
When we consider persistent homology with $\mathbb{Z}/2$ coefficients, the corresponding barcode is shown in Figure \ref{figure:bars2}\textbf{a}. When we consider persistent homology with real number coefficients, the corresponding barcode is shown in Figure \ref{figure:bars2}\textbf{b}. It is a common phenomenon that different coefficients lead to different persistent homology, as seen in the case of the homology generator $\overline{02}$ generated at $\bar{\ell} = 5/4$. At $\bar{\ell} = 2$, the boundary of the homology generator $\overline{032}$ is given by $2 \cdot \overline{03} - \overline{02}$ in real number coefficients. However, in $\mathbb{Z}/2$ coefficients, the boundary of $\overline{032}$ is exactly $\overline{02}$. This implies that at $\bar{\ell} = 2$, the generator $\overline{032}$ kills $\overline{02}$ in $\mathbb{Z}/2$ coefficients, but it does not kill $\overline{02}$ in real number coefficients.
\end{example}

\section{Applications}\label{section:application}

As discussed in the previous section, the frequency, length, and positional distribution of sequence fragments -- commonly referred to as motifs in biological contexts--are fundamental features for analyzing sequence data. These attributes not only capture essential structural information but also serve as a foundation for defining persistence parameters in computational studies. In this section, we illustrate how the TSA of motif frequency information within DNA sequences can be employed to perform biological clustering.

Frequency-based features are particularly attractive due to their computational efficiency. For instance, methods like Feature Frequency Profiles (FFP) capitalize on the $k$-mer frequency distributions in sequences to encode relevant information. While the computational cost of extracting frequency features is relatively low, the trade-off is a potential loss of accuracy in representing the nuanced properties of sequences. This study primarily aims to demonstrate the feasibility and effectiveness of employing frequency-based features in sequence analysis. Furthermore, this TSA  approach establishes a basis for the development of more sophisticated techniques to address the complexities inherent in sequence data.

The study of genome sequences is a fundamental tool for uncovering genetic information, exploring biological evolution, and understanding disease mechanisms. By analyzing DNA sequences, researchers can identify genetic variations, decode gene regulatory mechanisms, and advance the development of precision medicine.

In this work, to demonstrate the multi-scale TSA  of genome sequences, we apply phylogenetic analysis to datasets comprising the whole bacterial genomes and Ebola virus sequences. In this section, we demonstrate our method by applying the curve of the minimal positive eigenvalue (spectral gap) of the 1-dimensional Laplacian to perform a simple machine learning clustering analysis.

\begin{figure}[htb!]
  \centering
  \includegraphics[width=0.9\textwidth]{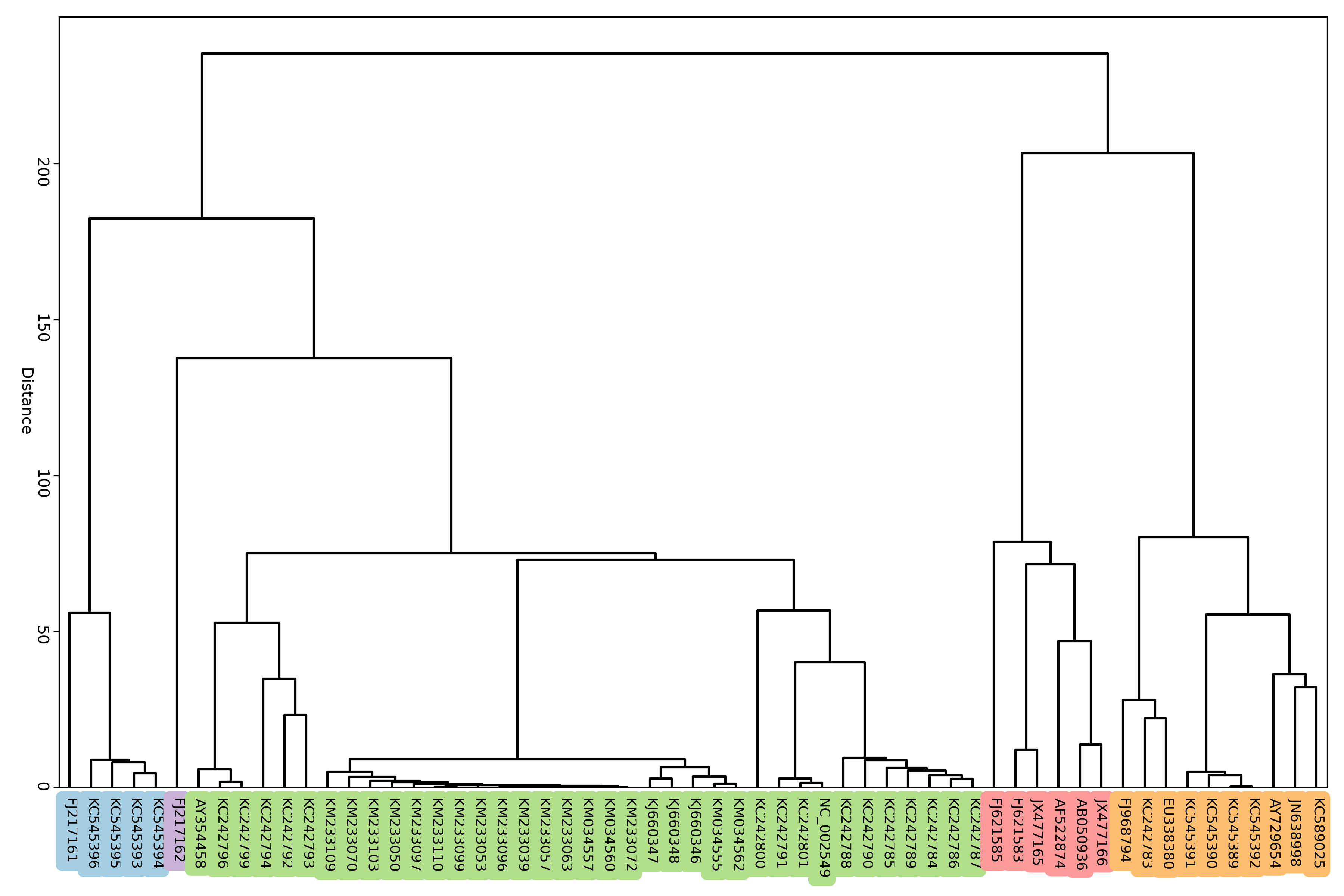}\\
  \caption{Phylogenetic tree for the Ebola virus generated using the TSA algorithm.}\label{figure:ebola}
\end{figure}

\paragraph{Topological distances}
In our algorithm, we directly use the Manhattan distance. Specifically, for two curves of spectral gaps $\lambda, \lambda': \mathbb{Z}_{\geq 0} \to \mathbb{R}$, their Manhattan distance is given by
\begin{equation*}
  d(\lambda, \lambda') = \sum\limits_{k \in \mathbb{Z}_{\geq 0}} |\lambda(k) - \lambda'(k)|.
\end{equation*}
In addition to the Manhattan distance, we can also consider other distances, such as the Euclidean distance
\begin{equation*}
d(\lambda, \lambda') = \sqrt{\sum_{k \in \mathbb{Z}_{\geq 0}} (\lambda(k) - \lambda'(k))^2}
\end{equation*}
and the Chebyshev distance
\begin{equation*}
d(\lambda, \lambda') = \max_{k \in \mathbb{Z}_{\geq 0}} |\lambda(k) - \lambda'(k)|.
\end{equation*}
More generally, the Minkowski distance can also be considered
\begin{equation*}
d(\lambda, \lambda') = \left( \sum_{k \in \mathbb{Z}_{\geq 0}} |\lambda(k) - \lambda'(k)|^p \right)^{1/p}.
\end{equation*}

\paragraph{TSA of Ebola virus genomes}

The Ebola virus dataset includes 59 complete sequences, each annotated with its corresponding classification into one of five categories: Bundibugyo virus (BDBV), Reston virus (RESTV), Ebola virus (EBOV), Sudan virus (SUDV), and Tai Forest virus (TAFV). As shown in Figure \ref{figure:ebola}, the phylogenetic tree for the Ebola virus, constructed using the TSA algorithm, demonstrates that this feature can classify the Ebola virus.

\begin{figure}[htb!]
  \centering
  \includegraphics[width=0.8\textwidth]{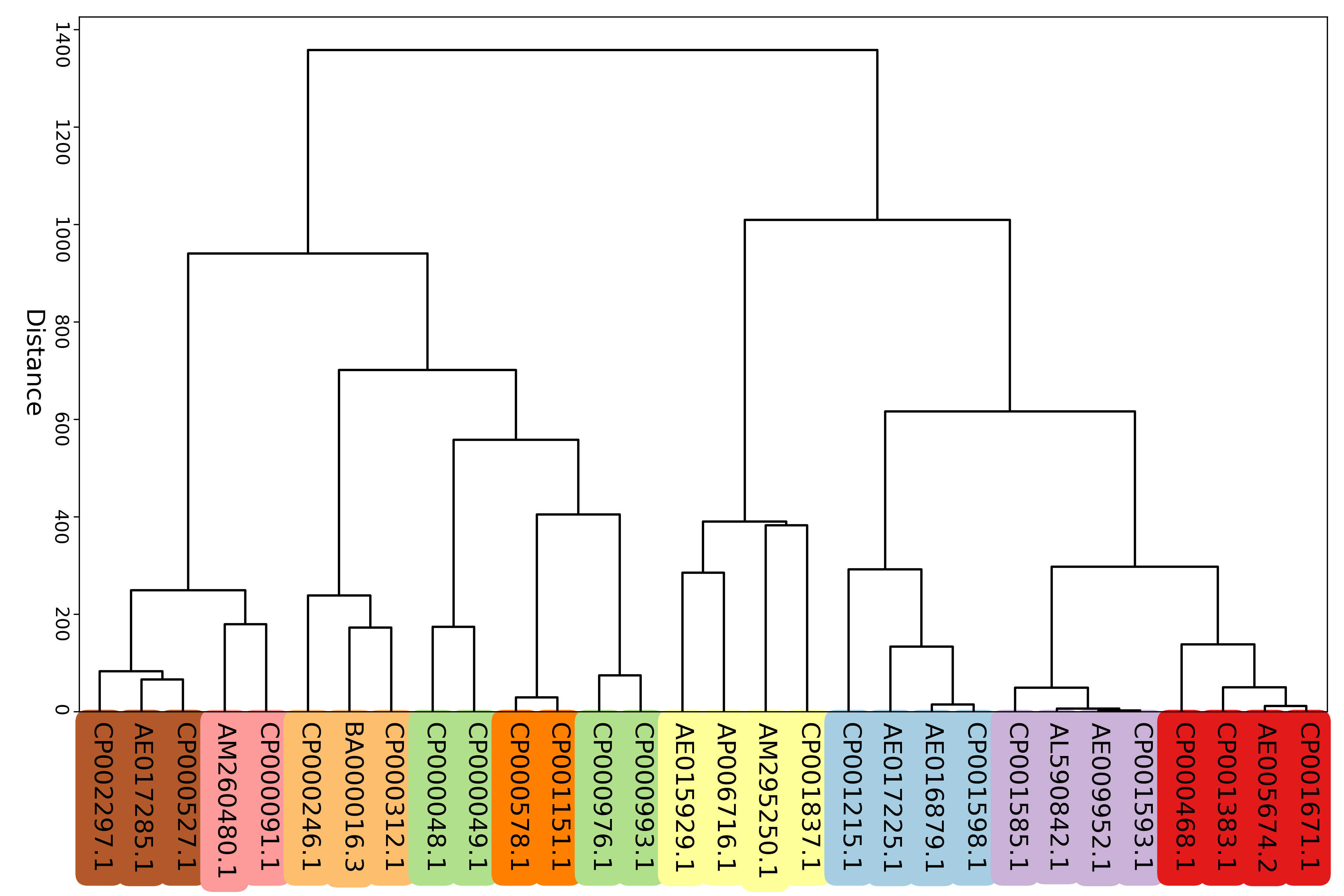}\\
  \caption{Phylogenetic tree for the whole bacterial genomes generated using the TSA algorithm.}\label{figure:bacteria}
\end{figure}

 \paragraph{TSA of bacterial genomes}
The dataset of whole bacterial genomes consists of 30 complete genomes from the families Bacillaceae, Borreliaceae, Burkholderiaceae, Clostridiaceae, Desulfovibrionaceae, Enterobacteriaceae, Rhodobacteraceae, Staphylococcaceae, and Yersiniaceae. The DNA sequence lengths of these genomes range from over 900,000 to more than 5 million base pairs. Such long sequences pose a significant challenge to computational methods. However, on our laptop configuration with an AMD Ryzen 5 5600H processor, the computation of the 0th, 1st, 2nd, and 3rd Betti curves and spectral gap curves for a 5 million base pair DNA sequence took about 6 seconds.
Table~\ref{table:time_compare} compares our TSA method with the $k$-mer topology approach \cite{hozumi2024revealing} for extracting topological features from 30 whole bacterial genomes. The results show that our TSA method is significantly faster, achieving comparable results in a fraction of the time, although its clustering performance is somewhat inferior.

The phylogenetic tree for the whole bacterial genomes is generally able to separate the families of bacterial genomes, as shown in Figure \ref{figure:bacteria}. However, for the genes CP000578.1 and CP001151.1, the classification results still exhibit some discrepancies. In fact, our example relies solely on frequency information, which inherently has certain limitations. Furthermore, we have only applied spectral gaps in dimension 1, which may prevent us from fully utilizing topological features in the training process. The primary contribution of our work is to propose a TSA method for constructing persistent homology on sequences. The effectiveness of training and data analysis will depend on the specific filtration functions and analytical techniques employed.

\begin{table}[ht]
  \centering
  \begin{tabular}{l|c}
    \hline
    Method & Total Time (s) \\
    \hline
    $k$-mer Topology ($k=3$) & 16895 \\
    $k$-mer Topology ($k=4$) & 5378 \\
    $k$-mer Topology ($k=5$) & 6596 \\
    TSA & \textbf{179} \\
    \hline
  \end{tabular}
  \caption{Comparison of total processing time (in seconds) for extracting topological features from 30 whole bacterial genomes using $k$-mer topology methods ($k = 3, 4, 5$)\cite{hozumi2024revealing}  and the present TSA method.}
  \label{table:time_compare}
\end{table}

\section{Conclusion}

Persistent homology and spectra-based persistent Laplacian  have been developed in the context of topological data analysis   over the past two decades, exerting a significant influence on fields such as computational science, biological sciences, physics,  chemistry, and deep learning. These methodologies have proven to be powerful tools for capturing multi-scale topological features in complex, high-dimensional, many-body, and nonlinear data. However, the application of algebraic topology to the analysis of sequential data, such as genome sequences, has not yet been sufficiently developed. The linear and combinatorial structures inherent in sequential data present unique challenges for which current topological frameworks are not fully equipped to address. The exploration of algebraic topology for sequence analysis holds considerable potential for advancing our understanding of sequence patterns, evolution, and dynamics. Bridging this gap requires the development of specialized filtration techniques and rigorous mathematical frameworks tailored to the structural and functional characteristics of sequence data. This represents a critical area for further research in topological data analysis  and topological machine learning.

In this work, we developed a topological sequence analysis (TSA) framework for analyzing sequences by treating sequence segments as simplices in the context of  $\Delta$-complexes. To establish persistent homology, we introduced a filtration of $\Delta$-complexes based on functions defined on sequence segments, ensuring validity through a $\Delta$-closure. For sequences with group structures, we extended this approach to classifying spaces, revealing new insights. Additionally, we integrated spectra-based persistent Laplacian information to enhance multi-scale analysis, demonstrating the framework's applicability through examples, such as the clustering of  DNA sequences. This method offers a versatile and effective approach for topological sequence analysis.

In the applications presented, we illustrated the utility of our TSA framework in  genome sequence clustering through the lens of topological persistence. By leveraging spectral gap curves of the 1-dimensional persistent Laplacian, we demonstrate the effectiveness of our approach in clustering biological datasets, such as Ebola virus sequences and whole bacterial genomes. The results show that while frequency-based features provide a computationally efficient means of capturing structural information, incorporating spectral features enhances the analytical depth. We show that the present TSA method is much faster than the $k$-mer topology model \cite{hozumi2024revealing}.
These studies validate the feasibility and scalability of our methods in handling diverse sequence data sets, while also highlighting potential limitations and opportunities for further refinement in leveraging topological features for biological sequence analysis.

Finally, the proposed TSA  approaches can be applied to   many other letter- or character-based sequential data, such as linguistics, literature, music, medical records, media records, and social chat records, with applications to surveillance, threat detection, and disease analysis.

\section*{Data and Code Availability}
The data and source code obtained in this work are publicly available in the Github repository: \url{https://github.com/WeilabMSU/Topological-persistence-on-sequences}.

\section{Acknowledgments}
This work was supported in part by NIH grants R01GM126189, R01AI164266, and R35GM148196, National Science Foundation grants DMS2052983, DMS-1761320, and IIS-1900473, Michigan State University Research Foundation, and  Bristol-Myers Squibb  65109. Jian was also supported by the Natural Science Foundation of China (NSFC Grant No. 12401080) and the start-up research fund from Chongqing University of Technology.

\bibliographystyle{plain}  
\bibliography{Reference}

\end{document}